\newtheorem{remark}{Remark}
\newtheorem{theorem}{\textbf{Theorem}}
\newtheorem{lemma}{\textbf{Lemma}}
\renewcommand\footnoterule{\kern-3pt \hrule width 2in \kern 2.6pt}
\journal{Nowhere}
\begin{document}

\begin{frontmatter}



\title{\LARGE A note on real   similarity \\ to a diagonal dominant matrix}


\author{Zhiyong Sun, Brian D. O. Anderson, Wei Chen}
\address[Tue]{Department of Electrical Engineering, Technische Universiteit Eindhoven, the Netherlands}
\address[ANU]{School of Engineering, The Australian National University, Canberra ACT
2601, Australia}
\address[PKU]{Department of Mechanics and Engineering Science, Peking University, Beijing 100871, China}


\begin{abstract}
This note  presents several conditions to characterize \textbf{real  matrix similarity} between a Hurwitz matrix (and then more generally,  a real square matrix) and a \textbf{diagonal dominant matrix}. 
\end{abstract}

\begin{keyword}


Hurwitz matrix, diagonal dominant matrix, real similarity. 
\end{keyword}

\end{frontmatter}


\section{Definitions and notations}
\begin{itemize}
    \item A real square matrix $A  = \{a_{ij}\}\in \mathbb{R}^{n \times n}$ is  \textbf{Hurwitz}  if all its eigenvalues have negative real parts. 
    \item \textit{Row}-diagonal dominant matrix:
    \begin{itemize}
        \item A real matrix $A  = \{a_{ij}\}\in \mathbb{R}^{n \times n}$ is (non-strict)  \textbf{\textit{\textbf{row}}-diagonal dominant}, if the absolute value of each diagonal entry is greater than or equal to the sum of the absolute values of off-diagonal entries in that row; \\i.e., $|a_{ii}|  \geq \sum_{j=1, j \neq i}^{n} |a_{ij}|, \forall i = 1, 2, \cdots, n$. 
    \item If the absolute value of each diagonal entry is \textbf{strictly} greater than  the sum of the absolute values of off-diagonal entries in that row, i.e., $
    |a_{ii}| > \sum_{j=1, j \neq i}^{n} |a_{ij}|, \forall i = 1, 2, \cdots, n$, then we call the matrix $A$ \textbf{strictly row-diagonal dominant}. 
    \end{itemize}
\item Similarly, we can define a \textbf{column-diagonal dominant} matrix by following the above definition but using the column sums of  off-diagonal entries. In this note, we call a matrix $A$ \textbf{diagonal dominant} if it is either row-diagonal dominant or column-diagonal dominant. \footnote{For the case of a complex matrix, the definition of a diagonal dominant complex matrix follows similarly but replacing `absolute value' by `magnitude' in the definition.   }

    \item A  real square matrix is a \textbf{Z-matrix} if it has nonpositive off-diagonal elements. 
    \item A \textbf{Metzler} matrix is a real matrix in which all the off-diagonal components are nonnegative (equal to or greater than zero). 
    \item A matrix $A \in \mathbb{R}^{n \times n}$ is called an \textbf{M-matrix}, if it is a Z-matrix and its  eigenvalues have positive real parts.  \footnote{In this note by convention an M-matrix is meant a \textit{non-singular} M-matrix. We will make it clear to distinguish non-singular M-matrix and \textit{singular M-matrix} in the context (A typical example of a singular M-matrix is   graph Laplacian matrix).}

\item For a real matrix $A = \{a_{ij}\} \in \mathbb{R}^{n \times n}$, we associate it with a  \textbf{comparison matrix} $M_A = \{m_{ij}\} \in \mathbb{R}^{n \times n}$, defined by 
    \begin{align}
    m_{ij} =   \left\{
       \begin{array}{cc}
       |a_{ij}|,  &\text{  if  } \,\,\,\,j  = i;  \\ \nonumber
       -|a_{ij}|,  &\text{  if  } \,\,\,\, j  \neq i.   \nonumber  
       \end{array}
      \right.
    \end{align}
    A given matrix $A$ is called an \textbf{H-matrix} if its comparison matrix $M_A$ is an M-matrix.  
\end{itemize}

\section{Problem of interest}
In this note, we are interested in the following problem:
\begin{itemize}
    \item When is a real Hurwitz matrix $A$ real similar to a real diagonal dominant matrix $B$?
\end{itemize}
and a more general problem on real similarity:
\begin{itemize}
    \item When is a real square matrix $A$ real similar to a   diagonal dominant matrix $B$?
\end{itemize}

By ``a real matrix $A$ is real similar to a real matrix $B$" we mean there exists a \textbf{real} non-singular matrix $P$ such that $B = PAP^{-1}$.

\section{Real similarity with $2 \times 2$ real matrix} 
\subsection{Case I: real matrix with real eigenvalues}
We first give the following conditions to characterize real similarity for $2 \times 2$ real Hurwitz matrix with real eigenvalues. 
\begin{lemma}  \label{lemma:lreal_eigenvalue}
Consider a $2 \times 2$ real Hurwitz matrix $A$ and suppose its two eigenvalues are real, denoted by $\lambda_1 <0$ and $\lambda_2<0$. Then there exists a real non-singular matrix $P_1$ such that $A$ is real similar to a real matrix $B_1 = P A P^{-1}$:
\begin{align}
    B_1=\begin{bmatrix} \lambda_1 &\star \\ 0 & \lambda_2
\end{bmatrix} 
\end{align}
where $\star$ denotes a `don't-care' term. Furthermore, there exists a real non-singular matrix $P_2$ such that $A$ is real similar to a  strictly diagonal dominant matrix $B_2 = P_2A P_2^{-1}$:
\begin{align}
    B_2=\begin{bmatrix} \lambda_1 & \beta \\ 0 & \lambda_2
\end{bmatrix} 
\end{align}
where $|\lambda_1| > |\beta|$. 
\end{lemma}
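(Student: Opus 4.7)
The plan is to proceed in two stages: first construct an upper-triangular real similar of $A$, then fine-tune it with a diagonal rescaling to enforce strict row-diagonal dominance. Since $A$ has real eigenvalues, a real nonzero eigenvector $v_1$ associated with $\lambda_1$ exists; I would complete it to any real basis $\{v_1, v_2\}$ of $\mathbb{R}^2$. Setting $Q = [v_1 \;\; v_2]$ and $P_1 = Q^{-1}$, the matrix $B_1 = P_1 A P_1^{-1}$ will have a zero in its $(2,1)$-entry because $A v_1 = \lambda_1 v_1$, and its diagonal entries must be $\lambda_1$ and $\lambda_2$ since similarity preserves the spectrum. This recipe covers both the diagonalizable case (whether the eigenvalues are distinct or equal) and the single-Jordan-block case $\lambda_1 = \lambda_2$ with geometric multiplicity one, so it produces the desired $B_1$ uniformly.

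For the strict-dominance refinement, I would start from
\begin{equation*}
B_1 = \begin{bmatrix} \lambda_1 & \alpha \\ 0 & \lambda_2 \end{bmatrix}
\end{equation*}
and conjugate by the diagonal matrix $D = \operatorname{diag}(1, t)$ with $t > 0$, which shrinks the off-diagonal entry:
\begin{equation*}
D B_1 D^{-1} = \begin{bmatrix} \lambda_1 & \alpha / t \\ 0 & \lambda_2 \end{bmatrix}.
\end{equation*}
Choosing any $t > |\alpha| / |\lambda_1|$ yields $\beta := \alpha / t$ with $|\beta| < |\lambda_1|$. Taking $P_2 = D P_1$ and $B_2 = P_2 A P_2^{-1}$, the strict row-diagonal dominance then follows in one line: row $1$ gives $|\lambda_1| > |\beta|$ by the choice of $t$, and row $2$ gives $|\lambda_2| > 0$ because $\lambda_2 < 0$ by the Hurwitz assumption.

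There is no real obstacle here; the construction is elementary linear algebra. The only mildly delicate point is the degenerate case $\lambda_1 = \lambda_2$ with $A$ non-diagonalizable, but the real-eigenvector-plus-completion step handles it without modification, so no case split is required. The argument implicitly uses $\lambda_1 \neq 0$ (which holds since $\lambda_1 < 0$) to make sense of $t > |\alpha|/|\lambda_1|$; if one wished to allow $\alpha = 0$, the choice $t = 1$ already works.
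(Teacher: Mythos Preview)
Your proof is correct and follows essentially the same approach as the paper: triangularize via a real change of basis (the paper simply cites the real Jordan form, whereas you spell out the eigenvector-plus-completion argument), then conjugate by a real diagonal matrix to shrink the single off-diagonal entry below $|\lambda_1|$. Your one-parameter scaling $D=\operatorname{diag}(1,t)$ is equivalent to the paper's two-parameter $\operatorname{diag}(k_1,k_2)$ since only the ratio matters, and your explicit check of both rows for dominance is a welcome bit of extra care.
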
 
\begin{proof}
The proof of the first statement follows similarly the standard procedure of real Jordan decomposition (e.g., \citep{horn2012matrix}) and is omitted here. We proceed with the proof of the second statement, which is equivalent to proving that `$B_1$ is real similar to $B_2$'. Consider a real diagonal matrix $\bar P = \text{diag} \{k_1, k_2\}$ with $k_1 >0, k_2 >0$. Then 
\begin{align}
    \bar P B_1  \bar P^{-1}=\begin{bmatrix} \lambda_1 &   (k_1 \star)/k_2 \\ 0 & \lambda_2
\end{bmatrix} 
\end{align}
Since both $\lambda$ and the `don't-care' term $\star$ are  bounded, there exist $k_1$ and $k_2$ such that $ |\lambda_1| > |(k_1 \star)/k_2| : = \beta$. By letting $P_2 := \bar P P$, we complete the proof.  
\end{proof}
Note  the proof of the above lemma does not require   the condition that $A$ is Hurwitz. The statements still hold with the condition  that the matrix $A$ has two real and \textbf{non-zero} eigenvalues. In fact, we conclude:
\begin{itemize}
    \item Every real $2 \times 2$   matrix with non-zero real eigenvalues is real similar to a strictly diagonal dominant real matrix. 
\end{itemize}

\subsection{Case II: real matrix with complex eigenvalues}
We then consider conditions for real matrix with complex eigenvalues. To begin, the Hurwitz constraint will not be imposed. 
\begin{lemma} \label{lemma:real_2by2_complex}
Consider the following $2 \times 2$ real matrices
\begin{equation}\label{eq:J}
J=\begin{bmatrix} \alpha&\beta\\-\beta&\alpha
\end{bmatrix} \quad \beta\neq 0
\end{equation}
and
\begin{equation}\label{eq:K}
K=\begin{bmatrix} a&b\\c&d
\end{bmatrix}
\end{equation}
Then $J$ and $K$ are similar if and only if there exist real $x$ and $y\neq 0$ such that
\begin{align}\label{eq:xy}
a&=\alpha-x\\\nonumber
d&=\alpha+x\\\nonumber
b&=y^{-1}\sqrt{\beta^2+x^2}\\\nonumber
c&=-y\sqrt{\beta^2+x^2}
\end{align}
\end{lemma}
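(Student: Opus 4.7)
The plan is to exploit the well-known fact that two $2\times 2$ real matrices whose eigenvalues form a non-real complex conjugate pair are real similar if and only if they share the same characteristic polynomial. Given the matrices (\ref{eq:J}) and (\ref{eq:K}), this reduces the similarity question to the pair of scalar equations coming from trace and determinant, and the parametrization in (\ref{eq:xy}) is then extracted algebraically.

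For the necessity direction, I would start from the equalities $\operatorname{tr}(K)=\operatorname{tr}(J)=2\alpha$ and $\det(K)=\det(J)=\alpha^2+\beta^2$, which are forced by similarity. The trace condition $a+d=2\alpha$ motivates introducing $x:=(d-a)/2$, so that $a=\alpha-x$ and $d=\alpha+x$. The determinant condition then becomes $bc=ad-(\alpha^2+\beta^2)=-(\beta^2+x^2)$. Since $\beta\neq 0$, the right-hand side is strictly negative, forcing $b$ and $c$ to be nonzero and of opposite signs. Writing $s:=\sqrt{\beta^2+x^2}>0$ and setting $y:=-c/s$, I obtain $y\neq 0$, $c=-y s$, and from $bc=-s^2$ also $b=y^{-1}s$, which is exactly (\ref{eq:xy}).

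For the sufficiency direction, direct substitution of (\ref{eq:xy}) into $K$ yields $\operatorname{tr}(K)=2\alpha$ and $\det(K)=(\alpha^2-x^2)+(\beta^2+x^2)=\alpha^2+\beta^2$. Hence $K$ and $J$ share the characteristic polynomial $\lambda^2-2\alpha\lambda+(\alpha^2+\beta^2)$, whose roots $\alpha\pm i\beta$ are non-real (since $\beta\neq 0$) and therefore simple. Both matrices are diagonalizable over $\mathbb{C}$ with the same spectrum, and uniqueness of the real Jordan form for matrices with non-real spectrum implies that $K$ and $J$ are real similar to a common real canonical block; in particular $K$ and $J$ are real similar to each other.

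The main subtle point is the sufficiency step: ``same characteristic polynomial implies similar'' is only true in general when the eigenvalues are simple, which here is guaranteed by $\beta\neq 0$. If a fully constructive similarity matrix is wanted instead of invoking real Jordan form uniqueness, one can build the transformation $P$ explicitly by taking a complex eigenvector of $K$ for the eigenvalue $\alpha+i\beta$ and using its real and imaginary parts as the columns of $P$; a short computation then shows $P^{-1}KP=J$.
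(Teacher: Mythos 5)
Your proof is correct and follows essentially the same route as the paper: reduce similarity to equality of trace and determinant, then solve for the parametrization in \eqref{eq:xy}. In fact you are slightly more careful than the paper on two points --- you justify explicitly why equal characteristic polynomials imply similarity here (the roots $\alpha\pm i\beta$ are simple because $\beta\neq 0$), a step the paper passes over with ``Hence they are similar,'' and your observation that $bc=-(\beta^2+x^2)<0$ gives a cleaner reason why $b$ and $c$ are nonzero than the paper's contradiction argument.
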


\begin{proof}
Suppose first that $J$ and $x,y\neq 0$ are given and the quantities $a,b,c,d $ are then defined as above. It is easily seen that $a+d=2\alpha$ and $ad-bc=\alpha^2+\beta^2$, ensuring that $J$ and $K$ have the same trace and determinant. Hence they are similar. 

Conversely, suppose $J$ and $K$ are given and known to be similar. Since the traces must be equal, there holds $a+d=2\alpha$, and then there must be an $x$ satisfying the first two equations of \eqref{eq:xy}; we can define it as $\alpha-a$ and there follows $x=d-\alpha$. 

We next argue that $b$ and $c$ must be nonzero. For a contradiction, we suppose to the contrary. Then the determinant of $K$ would be $\alpha^2-x^2$ while the determinant of $J$ is $\alpha^2+\beta^2$. Equality of the two determinants would imply that $x=0,\beta=0$, which contradicts the theorem hypothesis that $\beta\neq 0$.

Next, define $y$ by $y=\sqrt{\beta^2+x^2}/b$, and observe that $y\neq 0$. Using again the determinantal equality $\alpha^2+\beta^2=ad-bc$, and the expressions for $a,d$ and $b$ in \eqref{eq:xy}, the equality for $c$ in \eqref{eq:xy} is immediate.
\end{proof}

Given a matrix $J$ as in \eqref{eq:J} with $|\alpha|<|\beta|$, we now show that there is no choice of $x,y$ for which we can even get non-strict diagonal dominance of $K$. 
\begin{lemma} \label{lemma:impossible}
Consider the matrix $J$ of \eqref{eq:J} with $|\alpha|<|\beta|$. Then there is no choice of $x,y\neq 0$ such that the matrix $L :=K$ as below
\begin{equation}\label{eq:L}
L=\begin{bmatrix}
\alpha-x&y^{-1}\sqrt{\beta^2+x^2}\\
-y\sqrt{\beta^2+x^2}&\alpha+x
\end{bmatrix}
\end{equation}
is non-strictly diagonal dominant, and therefore no non-singular similarity transformation of $J$ for which the transformed matrix is non-strictly diagonal dominant. 
\end{lemma}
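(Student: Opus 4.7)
The plan is to show directly that the two scalar inequalities defining (non-strict) row-diagonal dominance of $L$ cannot simultaneously hold when $|\alpha|<|\beta|$, and that the same is true for column-diagonal dominance, so neither notion is achievable for any choice of $x$ and $y\neq 0$.

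First I would write down the row-diagonal-dominance conditions for $L$:
\begin{align*}
|\alpha-x| &\geq |y^{-1}|\sqrt{\beta^2+x^2}, \\
|\alpha+x| &\geq |y|\sqrt{\beta^2+x^2}.
\end{align*}
The key observation is that $y$ can be eliminated by multiplying these two inequalities, yielding the $y$-free necessary condition
\[
|\alpha-x|\,|\alpha+x| \;=\; |\alpha^{2}-x^{2}| \;\geq\; \beta^{2}+x^{2}.
\]
This reduces the problem to a one-variable real inequality in $x$, which is the whole point of the manipulation.

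Next I would split on the sign of $\alpha^{2}-x^{2}$. In the case $\alpha^{2}\geq x^{2}$, the inequality becomes $\alpha^{2}-x^{2}\geq \beta^{2}+x^{2}$, which forces $\alpha^{2}\geq \beta^{2}+2x^{2}\geq \beta^{2}$, directly contradicting the hypothesis $|\alpha|<|\beta|$. In the case $\alpha^{2}<x^{2}$, the inequality becomes $x^{2}-\alpha^{2}\geq \beta^{2}+x^{2}$, i.e., $-\alpha^{2}\geq \beta^{2}$, which forces $\alpha=\beta=0$ and contradicts $\beta\neq 0$. Either way we reach a contradiction, so row-diagonal dominance is impossible.

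Finally, for column-diagonal dominance of $L$, the two required inequalities are
\begin{align*}
|\alpha-x| &\geq |y|\sqrt{\beta^2+x^2}, \\
|\alpha+x| &\geq |y^{-1}|\sqrt{\beta^2+x^2},
\end{align*}
and multiplying them gives exactly the same necessary condition $|\alpha^{2}-x^{2}|\geq \beta^{2}+x^{2}$ as before, so the identical case analysis rules this out as well. Since, by Lemma~\ref{lemma:real_2by2_complex}, every matrix similar to $J$ has the form $L$ for some $x$ and $y\neq 0$, this also establishes the second assertion of the lemma. I do not anticipate a real obstacle here: the only mildly delicate point is the passage from the pair of inequalities to their product (which is legitimate since both sides of each inequality are non-negative), and the rest is elementary case-splitting.
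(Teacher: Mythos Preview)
Your proof is correct and follows essentially the same route as the paper: write the two diagonal-dominance inequalities, multiply them to eliminate $y$, and derive a contradiction from $|\alpha^{2}-x^{2}|\geq \beta^{2}+x^{2}$. The only cosmetic difference is that the paper bounds $|\alpha^{2}-x^{2}|\leq \alpha^{2}+x^{2}$ in one stroke (via $\beta^{2}=\alpha^{2}+\gamma^{2}$) rather than splitting on the sign of $\alpha^{2}-x^{2}$; your explicit treatment of the column case and the appeal to Lemma~\ref{lemma:real_2by2_complex} for the ``therefore'' clause are welcome additions that the paper leaves implicit.
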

\begin{proof}
Suppose, to obtain a contradiction, that $|\alpha|<|\beta|$  and there is a choice of $x,y\neq 0$ ensuring that $L$ is non-strictly diagonal dominant. 
Then there holds
\begin{align}\label{eq:DD}
|\alpha-x|&\geq|y^{-1}|\sqrt{\beta^2+x^2}\\\nonumber
|\alpha+x|&\geq|y|\sqrt{\beta^2+x^2}
\end{align}
 Multiplying the two inequalities together gives
 \begin{equation}
 |\alpha^2-x^2|=|\alpha-x||\alpha+x|\geq\beta^2+x^2
 \end{equation}
 Since $\beta^2>\alpha^2$, there exists a nonzero $\gamma^2$ such that $\beta^2=\alpha^2+\gamma^2$, and then the above inequality becomes
 \begin{equation}\label{eq:a_and_x}
 |\alpha^2-x^2|\geq\alpha^2+\gamma^2+x^2
 \end{equation}
 However, this inequality can never be satisfied: the inequality \eqref{eq:a_and_x} implies $ \alpha^2 + x^2 \geq  |\alpha^2-x^2|\geq\alpha^2+\gamma^2+x^2$ which is impossible with $\gamma^2 >0$. Therefore, there does not exist a non-singular  real transformation of $J$ for which the transformed matrix is non-strictly diagonal dominant. This completes the proof. 
\end{proof} 

We now summarize the above results and present the following main conditions.  
\begin{theorem} \label{thm:2by2}
Consider a $2 \times 2$ real Hurwitz matrix $A$ and suppose its two eigenvalues are complex, denoted by $\lambda_{1,2}  = \alpha \pm \beta j$ with   $\alpha<0, \beta\neq 0$.  Then there exists a real non-singular matrix $P_1$ such that $A$ is real similar to a real matrix $B_1$
\begin{align} \label{eq:B_1_real_complex}
    B_1=\begin{bmatrix} \alpha & \beta \\ -\beta & \alpha
\end{bmatrix} 
\end{align}
Furthermore, 
\begin{itemize}
    \item If $|\alpha| > |\beta|$, then there exists a real non-singular matrix $P$ such that $A$ is real similar to a \textbf{strictly} diagonal dominant real matrix $B$. 
    \item If $|\alpha| = |\beta|$, then there exists a real non-singular matrix $P$ such that $A$ is real similar to a \textbf{non-strict} diagonal dominant real matrix $B$. 
    \item If  $|\alpha| < |\beta|$, then there \textbf{does not} exist a real non-singular matrix $P$ such that $A$ is real similar to a  diagonal dominant real matrix $B$. 
\end{itemize}
\end{theorem}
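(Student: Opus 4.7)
The plan is to reduce every claim to the real canonical form $J = B_1$ of \eqref{eq:B_1_real_complex} and then apply the two preceding lemmas. The existence of $P_1$ with $B_1 = P_1 A P_1^{-1}$ is a direct invocation of the real Jordan decomposition for a matrix with a conjugate pair of eigenvalues $\alpha \pm \beta j$, exactly parallel to Lemma~\ref{lemma:lreal_eigenvalue}, and I would simply cite \citep{horn2012matrix}. Thereafter every real matrix similar to $A$ is similar to $J$, and by Lemma~\ref{lemma:real_2by2_complex} it takes the explicit form $L$ of \eqref{eq:L} for some real parameters $x$ and $y \neq 0$.

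For the first two bullets I would exhibit suitable parameters. Set $x = 0$, so that the row-diagonal-dominance conditions on $L$ collapse to $|\alpha| \ge |\beta|/|y|$ and $|\alpha| \ge |y|\,|\beta|$, i.e.\ $|\beta|/|\alpha| \le |y| \le |\alpha|/|\beta|$ (with strict inequalities for strict diagonal dominance). When $|\alpha| > |\beta|$ this interval is open and nonempty, so for instance $|y| = 1$ yields a strictly row-diagonal dominant $L$; when $|\alpha| = |\beta|$ the interval degenerates to $|y| = 1$, giving a non-strictly row-diagonal dominant $L$. In both cases Lemma~\ref{lemma:real_2by2_complex} guarantees a real $Q$ with $L = Q J Q^{-1}$ (concretely, a diagonal $Q = \text{diag}(1, y)$ realises the transformation), and $P := Q P_1$ is the desired similarity for $A$.

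For the third bullet, Lemma~\ref{lemma:impossible} directly rules out row-diagonal dominance of $L$ whenever $|\alpha| < |\beta|$, for every admissible $x$ and $y$. The column-diagonal dominance case is dispatched by the same multiplicative argument: the product of the two column-dominance inequalities again yields $|\alpha^2 - x^2| \ge \beta^2 + x^2$, which the proof of Lemma~\ref{lemma:impossible} shows to be impossible when $\beta^2 > \alpha^2$. The only point requiring care, and the closest thing to an obstacle here, is this symmetry observation ensuring that the impossibility covers both row and column dominance; beyond that, the theorem is immediate from the normal form together with Lemmas~\ref{lemma:real_2by2_complex} and~\ref{lemma:impossible}.
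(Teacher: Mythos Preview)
Your proposal is correct and follows essentially the same route as the paper: reduce to the real Jordan block $B_1$, observe that $B_1$ itself is (strictly or non-strictly) diagonal dominant when $|\alpha|\ge|\beta|$, and invoke Lemma~\ref{lemma:impossible} for the case $|\alpha|<|\beta|$. The only differences are cosmetic: the paper simply notes that $B_1$ already has the required dominance (rather than arriving at $x=0$, $|y|=1$ via the parametrization of Lemma~\ref{lemma:real_2by2_complex}), and it dispatches column dominance by the remark that $A$ is real similar to $A^{T}$ rather than by the symmetry of the product inequality.
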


\begin{proof}
The proof of the first statement follows similarly the standard procedure of real Jordan decomposition (e.g., \citep{horn2012matrix}) and is omitted here. We proceed with the proof of the second set of three statements.  If $|\alpha| > |\beta|$, then the matrix $B_1$ in \eqref{eq:B_1_real_complex} is a \textbf{strictly} diagonal dominant real matrix.  If $|\alpha| = |\beta|$, then the matrix $B_1$ in \eqref{eq:B_1_real_complex} is a \textbf{non-strict} diagonal dominant real matrix. By letting $B_1 : = B$ we conclude the  statements of these two cases. For the third case that $|\alpha| < |\beta|$, Lemma~\ref{lemma:impossible} states that there is no non-singular real transformation matrix $P$ such that $A$ is real similar to a  diagonal dominant real matrix $B$. 
\end{proof}

Note the above lemmas and Theorem~\ref{thm:2by2} do not require the condition that $A$ is Hurwitz. The statements on real similarity in Theorem~\ref{thm:2by2} still hold under the condition that the complex eigenvalues of the matrix $A$ satisfy $|\alpha| > |\beta|$. In particular, we conclude:
\begin{itemize}
    \item Every real $2 \times 2$  matrix with complex eigenvalues $\lambda_{1,2}  = \alpha \pm \beta j$ and with   $|\alpha| > |\beta|$ is real similar to a strictly diagonal dominant real matrix. 
\end{itemize}

As a side note, we also consider the problem of complex similarity of a $2 \times 2$ real matrix to a complex diagonal dominant matrix. 
\begin{lemma} \label{lemma:complex_similar}
   Every   $2 \times 2$ real Hurwitz  matrix is complex similar to a strictly diagonal dominant complex matrix; i.e., for any $2 \times 2$   real Hurwitz matrix $A$, there exists a \textbf{complex} non-singular matrix, such that the real matrix $A$ is similar to a (possibly complex) strictly diagonal dominant matrix.
\end{lemma}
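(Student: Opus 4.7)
The plan is to exploit the considerably greater flexibility afforded by complex similarity: over $\mathbb{C}$, every $2 \times 2$ matrix can be brought either to diagonal form or to a single Jordan block, and in both situations a small additional scaling yields strict diagonal dominance in the complex sense defined in the footnote of Section~1.

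First, I would split into cases according to the Jordan structure of $A$. If the two eigenvalues of $A$ are distinct (whether they are two negative reals as in Case I of Section~3, or a complex conjugate pair $\alpha \pm \beta j$ with $\beta \neq 0$ as in Case II), then there exists a complex non-singular $P$ such that $P A P^{-1} = \text{diag}(\lambda_1, \lambda_2)$. Because $A$ is Hurwitz, each $\lambda_i$ has strictly negative real part and is in particular nonzero, so $|\lambda_i| > 0$, while the off-diagonal entries of the diagonal matrix vanish; strict diagonal dominance holds trivially in both rows.

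Next, I would handle the remaining case of a repeated eigenvalue. Since complex eigenvalues of a real matrix come in conjugate pairs, a repeated eigenvalue of a $2 \times 2$ real Hurwitz matrix is a single negative real number $\lambda < 0$. If $A$ is semisimple the argument above applies; otherwise the complex Jordan form is
\begin{equation*}
J = \begin{bmatrix} \lambda & 1 \\ 0 & \lambda \end{bmatrix}.
\end{equation*}
Applying an additional diagonal similarity $\bar P = \text{diag}(k_1, k_2)$ with $k_1, k_2 > 0$, exactly in the manner of Lemma~\ref{lemma:lreal_eigenvalue}, rescales the super-diagonal entry to $k_1/k_2$, which can be made arbitrarily small; choosing the ratio strictly less than $|\lambda|$ produces a strictly diagonal dominant matrix, completing the argument in this sub-case.

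There is no serious obstacle here. The reason Case II of Theorem~\ref{thm:2by2} failed for $|\alpha| < |\beta|$ is that a \emph{real} similarity is forced to preserve a nontrivial off-diagonal block whose entries obey the determinantal constraint $bc = -(\beta^2 + x^2)$ of Lemma~\ref{lemma:real_2by2_complex}, forcing the product $|b||c|$ to exceed $|a||d|$ when $|\alpha| < |\beta|$. With complex similarity this obstruction disappears entirely because full diagonalization over $\mathbb{C}$ is available, so no comparison between $|\alpha|$ and $|\beta|$ enters, and the proof reduces to Jordan form plus, in the defective case only, one diagonal scaling.
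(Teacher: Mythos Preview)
Your proof is correct and follows essentially the same approach as the paper: both arguments reduce to the complex Jordan form together with a diagonal scaling to shrink any surviving off-diagonal entry. The only cosmetic difference is that you split the cases by ``distinct versus repeated eigenvalues'' whereas the paper splits by ``real versus complex eigenvalues'' (invoking Lemma~\ref{lemma:lreal_eigenvalue} for the former and an explicit $P_3$ diagonalizing $B_1$ for the latter); the underlying ideas are identical.
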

\begin{proof}
    For the case that  the Hurwitz matrix $A$ has two real eigenvalues, the proof follows similarly to that of Lemma~\ref{lemma:lreal_eigenvalue} while we can take the real matrix $P$ as the transformation matrix. For the case that  the Hurwitz matrix $A$ has complex eigenvalues, it is real similar to the real matrix $B_1$ in \eqref{eq:B_1_real_complex}. Taking a non-singular complex matrix
\begin{align}
    P_3 =    \begin{bmatrix} -j & -j \\ 1 & -1
\end{bmatrix} 
\end{align}
we can obtain
\begin{align}
   P_3^{-1} B_1 P_3 =  \begin{bmatrix} \alpha + b j & 0 \\ 0 & \alpha - b j
\end{bmatrix} 
\end{align}
which is complex and diagonal. This completes the proof. 
\end{proof}
Note: the statement in this lemma (for the complex eigenvalue  case) is a special case of the general result: a matrix that has distinct eigenvalues is diagonalizable. 

\begin{remark}
The above results treat similarity to a row-diagonal dominant matrix; while evidently, all main statements can be extended to a column-diagonal dominant matrix. This is due to the following well-known result: a real square matrix $A$ is real similar to $A^T$.  
\end{remark}
 \section{Intuitive interpretation by Gershgorin's  circles }
The Gershgorin circle theorem \citep{varga2010gervsgorin} gives a convenient method for characterizing locations of eigenvalues for a (real or complex) matrix: all eigenvalues of a matrix are located in the union of all Gershgorin circles, while each circle is centered at each diagonal entry with radius defined by the sum of the magnitude of all off-diagonal entries (in the corresponding row or column). We now provide a Gershgorin circle interpretation to the main results of Lemmas 1-4. 

For Lemma~\ref{lemma:lreal_eigenvalue}  when the Hurwitz matrix $A$ has real non-zero eigenvalues: the real matrix $P$ serves as a real scaling matrix such that the transformed real matrix $B_2 = P_2^{-1} A P_2$ has two scaled Gershgorin circles; each circle is centered at one real eigenvalue and does not touch upon the origin point. In fact, the radius of each circle can be made arbitrarily small by a carefully chosen transformation matrix $P$, and the diagonal dominant property of $B_2$ is ensured. 

For Theorem~\ref{thm:2by2} when the matrix $A$ has complex eigenvalues and is real similar to the matrix the matrix $B_1$ in \eqref{eq:B_1_real_complex}: the real similarity transformation via $P$ results in another real matrix (in particular, the diagonal entries are real), and the centers  of all Gershgorin circles are restricted to lying on the real line. For the matrix  $B_1$ in \eqref{eq:B_1_real_complex} and its real similarity classes, the \textbf{smallest} Gershgorin circle  with a center on the real line that includes both complex eigenvalues is the circle centered at $\alpha$ with a radius $|\beta|$. For $|\beta| > |\alpha|$, such a Gershgorin circle encircles the origin and always touches upon the complex line. There is no hope of finding a real transformation matrix $P_2$ to further scale the circles so that they do  not encircle the origin, which implies that $A$ cannot be real similar to a diagonal dominant matrix. 

For Lemma~\ref{lemma:complex_similar} with complex matrix similarity: when the transformation matrix $P$ is allowed to be a complex matrix, the transformed matrix can also be complex (in particular, the diagonal entries can be complex). The Gershgorin circles are not necessarily centered on the real line but can be relocated in the complex plane by the complex matrix $P$. In this case, when the eigenvalues have a non-zero real part, each  Gershgorin circle  can be shifted and centered at an eigenvalue, and its size can be rescaled with an arbitrarily small radius so that it does not encircle the origin. Thus, in this case, any real matrix with each eigenvalue of non-zero real part is complex similar to a (possibly complex) diagonal dominant matrix.

\newpage
\section{Real similarity with  general matrix}
We are now ready to show the following general results. We note the following theorem does not require the Hurwitz condition. 
\begin{theorem} \label{thm:general}
    Consider a real square non-singular matrix $A \in \mathbb{R}^{n \times n}$. Then the following statements hold.
    \begin{itemize}
        \item Suppose that all eigenvalues of $A$ are real. Then $A$ is real similar to a strictly diagonal dominant real matrix.  
        \item Suppose that some eigenvalues of $A$ are complex and for all complex eigenvalue pairs $\lambda_i = \alpha_i \pm \beta_i j$ there holds $|\alpha_i| > |\beta_i|, \forall i$. Then  $A$ is  real similar to a strictly diagonal dominant real matrix.  
        \item Suppose that some eigenvalues of $A$ are complex and for all complex eigenvalue pairs $\lambda_i = \alpha_i \pm \beta_i j$ there holds $|\alpha_i| \geq |\beta_i|, \forall i$. Furthermore, for any complex eigenvalue pair $\lambda_i = \alpha_i \pm \beta_i j$ with $|\alpha_i| = |\beta_i|$,  suppose its  geometric  multiplicity equals its algebraic multiplicity. Then  $A$ is real similar to a non-strict diagonal dominant real matrix.  
        \item For all other cases not covered by the above conditions, there \textbf{does not} exist a real non-singular matrix $P$ such that $A$ is real similar to a  diagonal dominant real matrix. 
    \end{itemize}
    \end{theorem}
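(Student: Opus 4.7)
The plan is to reduce the statement to the real Jordan canonical form of $A$ and analyse it block by block. Since real similarity is an equivalence relation, $A$ is real similar to a diagonal dominant matrix if and only if its real Jordan form $J_R$ is, so we may work directly with $J_R$. Recall $J_R$ is block diagonal: one generalized real Jordan block per real eigenvalue (with $\lambda$ on the diagonal and $1$'s above), and one per complex pair $\alpha \pm \beta j$ (with $2\times 2$ diagonal blocks of the form \eqref{eq:J} and $I_2$ super-diagonal blocks, repeated according to the Jordan chain length).

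For the three sufficiency statements, the construction is a block-diagonal real scaling $\bar P = \mathrm{diag}(1,\epsilon,\epsilon^2,\ldots)$, with $I_2$ in place of $1$ for complex blocks, applied independently to each generalized Jordan block. Such a scaling leaves the diagonal entries of the block fixed but shrinks its super-diagonal content to $\epsilon$. For a real-eigenvalue block $J_k(\lambda)$ with $\lambda \neq 0$ (guaranteed by non-singularity of $A$), choosing $\epsilon < |\lambda|$ yields strict dominance. For a complex block with $|\alpha| > |\beta|$, the $2\times 2$ diagonal contributes $|\beta|$ to the row sum and the super-diagonal contributes $\epsilon$, so choosing $\epsilon < |\alpha|-|\beta|$ again yields strict dominance. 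For a complex block with $|\alpha|=|\beta|$ and coincident geometric and algebraic multiplicity, there are no super-diagonal $I_2$ blocks and $J_R$ restricted to the block is itself non-strictly diagonal dominant. Combining the block constructions yields the required similar matrix, with strict dominance in the first two cases and only non-strict dominance in the third.

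For the non-existence statement I would argue by contradiction: suppose $A$ has an offending complex pair $\alpha \pm \beta j$ (with $|\alpha|<|\beta|$, or with $|\alpha|=|\beta|$ and deficient geometric multiplicity) and yet $A=PBP^{-1}$ for some real diagonal dominant $B$. Gershgorin's theorem places each eigenvalue of $B$ in a disk $D(b_{ii},r_i)$ with $r_i\leq |b_{ii}|$, and substituting $z=\alpha+\beta j$ into $|z-b_{ii}|\leq |b_{ii}|$ gives $\alpha^2+\beta^2\leq 2\alpha b_{ii}$; in the pure-imaginary sub-case $\alpha=0$ this is already contradictory, which handles the extremal direction $|\alpha|<|\beta|$ with $\alpha=0$. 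For general $|\alpha|<|\beta|$ the scalar Gershgorin bound is not itself contradictory, so I would refine the argument by passing to the real two-dimensional $B$-invariant subspace associated with the pair, reducing via a real change of basis within this subspace to the setup of Lemma~\ref{lemma:impossible}, and recovering the doubled-up inequality \eqref{eq:a_and_x} from the eigenvalue equation at two indices of large eigenvector modulus. For the $|\alpha|=|\beta|$ deficient-geometry case, the non-trivial Jordan chain forces a super-diagonal $I_2$ block in $J_R$; any real similarity must redistribute this extra off-diagonal mass, and I would argue that it cannot be absorbed into the already-tight non-strict dominance.

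The main obstacle is the reduction from the ambient $n\times n$ matrix $B$ to the $2\times 2$ obstruction of Lemma~\ref{lemma:impossible}: the restriction of a diagonal dominant matrix to an invariant subspace is not, in general, diagonal dominant in any basis, so the $2\times 2$ lemma does not apply verbatim. I would sidestep this by working directly with the complex eigenvector $v$ of $B$ satisfying $Bv=(\alpha+\beta j)v$ and mimicking, in the $n\times n$ Gershgorin inequalities at two indices where $|v_k|$ and $|v_\ell|$ are large, the product-of-two-inequalities step that produced \eqref{eq:a_and_x} in the proof of Lemma~\ref{lemma:impossible}. That product step is exactly the place where $|\alpha|<|\beta|$ is ruled out, and recasting it in the general-$n$ setting is the technical heart of the argument.
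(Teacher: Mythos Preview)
Your treatment of the three sufficiency bullets is correct and is essentially the paper's own argument: pass to the real Jordan form and rescale each generalised Jordan block by a diagonal matrix of the form $\mathrm{diag}(1,\epsilon,\epsilon^2,\dots)$ (with scalars replaced by $I_2$ for complex blocks).

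For the fourth (non-existence) bullet you have correctly put your finger on the real obstacle, and it is the same step the paper glosses over: the paper argues that because a single real Jordan block $J_i$ (with $|\alpha_i|<|\beta_i|$, or with $|\alpha_i|=|\beta_i|$ and a nontrivial chain) is not real-similar to a diagonal dominant matrix, ``as a consequence'' $A$ is not either. As you observe, a general real similarity need not respect the block decomposition, and the restriction of a diagonal dominant matrix to an invariant subspace need not be diagonal dominant in any basis, so the $2\times2$ obstruction of Lemma~\ref{lemma:impossible} does not transfer automatically. Your proposed repair, however, cannot be completed. The single-index Gershgorin bound at the index $k$ maximising $|v_k|$ only gives $\alpha^{2}+\beta^{2}\le 2\alpha\,b_{kk}$, which places no constraint on $|\beta|/|\alpha|$, and there is no canonical ``second index'' supplying the companion inequality needed to imitate the product step leading to \eqref{eq:a_and_x}. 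In fact no argument can succeed, because the non-existence claim is false for $n\ge 3$: the circulant
\[
B=\begin{bmatrix}2&2&0\\0&2&2\\2&0&2\end{bmatrix}
\]
is (row- and column-) diagonal dominant and non-singular, with characteristic polynomial $(\lambda-2)^{3}-8$ and hence eigenvalues $4$ and $1\pm\sqrt{3}\,j$; the complex pair has $|\alpha|=1<\sqrt{3}=|\beta|$, so $B$ falls under ``all other cases'' yet is trivially real similar (via $P=I$) to a diagonal dominant matrix. Replacing each off-diagonal $2$ by $2-\epsilon$ even yields a \emph{strictly} diagonal dominant instance with the same property. The gap you identified is therefore not a technicality but a genuine error in the statement: the fourth bullet of the theorem, as written, is false.
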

    \begin{proof}
      First, we recall the fact that any real square matrix $A \in \mathbb{R}^{n \times n}$ is real similar to the following real Jordan normal form (cf. \citep[Chapter 3]{horn2012matrix})
      \begin{align}
          J = PA P^{-1} = \text{blk-diag} \{J_1, J_2, \cdots, J_i, \cdots, J_k\}
      \end{align}
      where $P$ is a real non-singular matrix, $J_i$ is the $i$-th \textbf{real} Jordan block associated with the eigenvalues of $A$.
          \begin{itemize}
        \item If all eigenvalues of $A$ are real, then the diagonal entries of $J$ are the set of eigenvalues of $A$ (counting multiplicities). Following a similar proof to that of Lemma~\ref{lemma:lreal_eigenvalue}, one can find a real non-singular diagonal matrix $\bar P$ such that $\bar P J \bar P^{-1}$ is strictly diagonal dominant.  
        \item  If some eigenvalues of $A$ are complex, denoted by $\lambda_i = \alpha_i \pm \beta_i j$, then the $2 \times 2$ block matrix 
\begin{align}  
    B_i=\begin{bmatrix} \alpha_i & \beta_i \\ -\beta_i & \alpha_i
\end{bmatrix}
\end{align}        
   appears in the $i$-th real Jordan block that takes the following form
\begin{align} \label{eq:jordan_block}
J_{i}={\begin{bmatrix}B_{i}&I_2&&\\&B_{i}&\ddots &\\&&\ddots &I_2\\&&&B_{i}\end{bmatrix}}
\end{align}  
We now consider the following cases that  for all complex eigenvalue pairs $\lambda_i = \alpha_i \pm \beta_i j$ there hold $|\alpha_i| \geq |\beta_i|, \forall i$.
 \begin{itemize}
     \item Case I:  for all complex eigenvalue pairs $\lambda_i = \alpha_i \pm \beta_i j$ there hold $|\alpha_i| > |\beta_i|, \forall i$. Then following a similar proof to that of Lemma~\ref{lemma:lreal_eigenvalue}, one can find a real non-singular  matrix $\bar P_i$ such that $\bar P_i J_i \bar P_i^{-1}$ is strictly diagonal dominant, which in turn ensures that $A$ is real similar to a strictly diagonal dominant real matrix.  
        \item Case II:  for all complex eigenvalue pairs $\lambda_i = \alpha_i \pm \beta_i j$ with $|\alpha_i| = |\beta_i|$,  suppose the  geometric  multiplicity equals the algebraic multiplicity. Then the real Jordan block $J_i$ for such an eigenvalue pair $\lambda_i$ with $|\alpha_i| = |\beta_i|$  degenerates to 
        $J_i = B_i$. According to Theorem~\ref{thm:2by2},  this ensures that $A$ is real similar to a non-strict diagonal dominant real matrix.  
        \item Case III: for all complex eigenvalue pairs $\lambda_i = \alpha_i \pm \beta_i j$ with $|\alpha_i| = |\beta_i|$,  suppose for at least one such pair, the  algebraic multiplicity is greater than the geometric  multiplicity. Then   the real Jordan block $J_i$ in \eqref{eq:jordan_block} for such an eigenvalue pair $\lambda_i$ has at least one off-diagonal identity block $I_2$. As a consequence, there \textbf{does not} exist a real non-singular matrix $\bar P_i$ such that $\bar P_i J_i \bar P_i^{-1}$ is  diagonal dominant, and $A$ is not real similar to a diagonal dominant matrix. 
    \end{itemize}
\item For all other cases with complex eigenvalue pairs   (i.e., there exist at least one complex eigenvalue with $|\alpha_i| < |\beta_i|$), according to Theorem~\ref{thm:2by2}, the associated real Jordan block $J_i$ cannot be made to be real similar to a diagonal dominant matrix. As a consequence, there \textbf{does not} exist a real non-singular matrix $P$ such that $A$ is real similar to a  diagonal dominant real matrix. 
     \end{itemize}
 
    \end{proof}

To complete the story, we further discuss complex similarity of a general real square matrix
to a  diagonal dominant matrix.
\begin{theorem}
    Every real Hurwitz matrix is complex similar to a (possibly complex) strictly diagonal dominant   matrix.  
\end{theorem}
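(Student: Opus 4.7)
The plan is to reduce to the complex Jordan normal form and then exploit a standard diagonal scaling trick on each Jordan block. Since $A$ is real (a fortiori a complex matrix), there exists a non-singular complex matrix $Q$ such that $Q^{-1} A Q = J = \operatorname{blk-diag}\{J_1, J_2, \ldots, J_k\}$, where each $J_i$ is a complex Jordan block associated with an eigenvalue $\lambda_i$ of $A$. Because $A$ is Hurwitz, every $\lambda_i$ satisfies $\operatorname{Re}(\lambda_i) < 0$, so in particular $|\lambda_i| > 0$ for all $i$.

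Next I would apply, block by block, a diagonal scaling. For a single Jordan block $J_i$ of size $m_i$ with eigenvalue $\lambda_i$ on the diagonal and $1$'s on the superdiagonal, consider $D_i = \operatorname{diag}(1, \epsilon, \epsilon^2, \ldots, \epsilon^{m_i - 1})$ for some $\epsilon > 0$. A direct computation gives
\begin{equation}
D_i^{-1} J_i D_i = \begin{bmatrix} \lambda_i & \epsilon & & \\ & \lambda_i & \ddots & \\ & & \ddots & \epsilon \\ & & & \lambda_i \end{bmatrix},
\end{equation}
so the off-diagonal row sums equal $\epsilon$ (or $0$ in the last row), while each diagonal entry has magnitude $|\lambda_i| > 0$. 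Choosing $\epsilon$ strictly less than $\min_i |\lambda_i|$ yields strict row-diagonal dominance of $D_i^{-1} J_i D_i$.

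Assembling the block-diagonal matrix $D = \operatorname{blk-diag}\{D_1, \ldots, D_k\}$ and setting $P = Q D$, the matrix $P^{-1} A P = D^{-1} J D$ is block-diagonal with strictly row-diagonal dominant diagonal blocks, and since the off-block entries are zero, the whole matrix is strictly row-diagonal dominant. Because $P$ is complex and non-singular, this shows that $A$ is complex similar to a (possibly complex) strictly diagonal dominant matrix, as required.

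There is no genuine obstacle here; the only point that deserves care is the observation that the Hurwitz hypothesis is exactly what makes the diagonal entries of $J$ nonzero in magnitude so that the scaled superdiagonal entries can be made smaller than $|\lambda_i|$. If one wanted to generalize, the hypothesis could be relaxed to ``$A$ is non-singular'' (equivalently, no zero eigenvalue), which is precisely the complex analogue of Theorem~\ref{thm:general} once the Jordan block scaling replaces the real Jordan block arguments used there.
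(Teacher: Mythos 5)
Your proof is correct, and it is in fact more self-contained than what the paper offers: the paper omits the proof entirely, saying only that it ``follows similarly'' from the $2\times 2$ complex-similarity case (Lemma~\ref{lemma:complex_similar}, which diagonalizes each $2\times 2$ rotation block $B_i$ with a complex transformation) together with the real Jordan form machinery of Theorem~\ref{thm:general}. The route the paper gestures at would therefore pass through the \emph{real} Jordan normal form and then apply blockwise complex diagonalization plus scaling, whereas you go directly to the \emph{complex} Jordan normal form and apply the standard $D_i=\operatorname{diag}(1,\epsilon,\dots,\epsilon^{m_i-1})$ scaling to each block. Your version is cleaner: it handles real and complex eigenvalues, and arbitrary Jordan block sizes, in one uniform step, and it makes transparent that the only role of the Hurwitz hypothesis is to guarantee $|\lambda_i|>0$ --- which is exactly why, as you observe, the statement relaxes to non-singular $A$, matching the paper's subsequent more general theorem. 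The only detail worth stating explicitly is that ``diagonal dominance'' for the resulting complex matrix is meant in the magnitude sense of the paper's footnote, which your row-sum estimate already uses correctly.
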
    
A more general result on complex similarity is shown below.
\begin{theorem}
    Every real non-singular matrix  is complex similar to a (possibly complex) strictly diagonal dominant   matrix.  
\end{theorem}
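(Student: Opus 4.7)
The plan is to pass to the complex Jordan normal form of $A$ and then use a block-diagonal similarity to shrink every off-diagonal entry to an arbitrarily small value. Since complex similarity transformations are allowed, I would first invoke the standard complex Jordan decomposition: there exists a non-singular complex matrix $Q$ such that
\begin{equation}
Q^{-1} A Q = J = \text{blk-diag}\{J_1, J_2, \ldots, J_k\},
\end{equation}
where each $J_i$ is a standard complex Jordan block of size $m_i$, with eigenvalue $\lambda_i$ on the diagonal and $1$'s on the superdiagonal. Because $A$ is non-singular, every $\lambda_i$ is non-zero, so $\mu := \min_{i} |\lambda_i| > 0$. This is the only place where the non-singularity hypothesis enters, and it is essential: if any $\lambda_i = 0$, the corresponding block row carrying a $1$ on the superdiagonal could never be made strictly diagonal dominant.

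Next, for each Jordan block $J_i$ I would apply the diagonal scaling $D_i(\epsilon) := \text{diag}(1, \epsilon, \epsilon^2, \ldots, \epsilon^{m_i - 1})$ with a small parameter $\epsilon > 0$. A direct computation (the same trick used in proofs of Gershgorin-type localization for Jordan forms) gives that $D_i(\epsilon)^{-1} J_i D_i(\epsilon)$ retains $\lambda_i$ on its diagonal but replaces the $1$'s on the superdiagonal with $\epsilon$'s. Assembling the $D_i(\epsilon)$ into a block-diagonal matrix $D(\epsilon) := \text{blk-diag}\{D_1(\epsilon), \ldots, D_k(\epsilon)\}$, the similarity $D(\epsilon)^{-1} J D(\epsilon)$ has $\lambda_i$ on the diagonal of the $i$-th block and only $\epsilon$ entries on the superdiagonal, with all other entries zero.

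Finally, I would choose any $\epsilon$ with $0 < \epsilon < \mu$. Each row of $D(\epsilon)^{-1} J D(\epsilon)$ is either the last row of some Jordan block (which contains only the diagonal entry $\lambda_i$ and trivially satisfies strict diagonal dominance because $|\lambda_i| \geq \mu > 0$), or a row whose only off-diagonal entry is a single $\epsilon$, in which case $|\lambda_i| \geq \mu > |\epsilon|$. Hence $D(\epsilon)^{-1} J D(\epsilon)$ is strictly row-diagonal dominant, and setting $P := Q\, D(\epsilon)$ gives a complex non-singular similarity transformation under which $P^{-1} A P$ is strictly diagonal dominant.

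I do not anticipate a genuine obstacle: the argument is essentially the classical observation that Jordan blocks can be made arbitrarily close to diagonal by a complex diagonal conjugation. The only subtle point is confirming that non-singularity of $A$ is exactly the condition that guarantees $\mu > 0$, and hence that a feasible $\epsilon$ exists; without it the statement of the theorem would fail in the presence of a non-trivial Jordan block at the zero eigenvalue.
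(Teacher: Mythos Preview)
Your argument is correct and matches the approach the paper sketches: the paper omits the proof but indicates it follows the pattern of Lemma~\ref{lemma:lreal_eigenvalue} (diagonal scaling to shrink superdiagonal entries) combined with Lemma~\ref{lemma:complex_similar} and Theorem~\ref{thm:general} (Jordan form reduction). The only cosmetic difference is that you pass directly to the complex Jordan normal form, whereas the paper's outline would first take the real Jordan form and then complex-diagonalize each $2\times 2$ block $B_i$ as in Lemma~\ref{lemma:complex_similar}; the resulting block-diagonal matrix is the same, and the subsequent $D(\epsilon)$ scaling is identical in spirit to the $\bar P=\mathrm{diag}\{k_1,k_2\}$ trick of Lemma~\ref{lemma:lreal_eigenvalue}.
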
  
The proofs follow similarly the proof of the $2  \times 2$ matrix case and the real similarity in Theorem~\ref{thm:general}, which are omitted here. 

\section{Special cases}
Some special cases are recalled here for further references \citep{horn2012matrix, berman1994nonnegative}. 


\begin{lemma}
    Consider a real Hurwitz matrix $A$ and suppose its off-diagonal entries are non-negative (i.e., $-A$ is an M-matrix). Then there exists a real \textbf{diagonal} matrix $K$ such that $K A K^{-1}$ is strictly diagonal dominant with negative diagonal entries. 
\end{lemma}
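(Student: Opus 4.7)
The plan is to reduce the claim to a standard characterization of nonsingular M\nobreakdash-matrices. First I would note that the hypothesis makes $M := -A$ a nonsingular M\nobreakdash-matrix: its off\nobreakdash-diagonal entries are $-a_{ij}\le 0$, so it is a Z\nobreakdash-matrix, and because $A$ is Hurwitz, the eigenvalues of $-A$ have positive real parts. This already forces the diagonal entries of $A$ to be (strictly) negative, which takes care of the sign requirement on the diagonal of $KAK^{-1}$.

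Next I would invoke the standard M\nobreakdash-matrix result (cf.\ \citep{berman1994nonnegative}) that a nonsingular M\nobreakdash-matrix $M$ is inverse\nobreakdash-nonnegative, $M^{-1}\ge 0$ entrywise, and hence admits a strictly positive vector $x>0$ with $Mx>0$ componentwise. A concrete choice is $x := M^{-1}e$, where $e = (1,\ldots,1)^T$: nonsingularity of $M^{-1}$ prevents any row of $M^{-1}$ from being identically zero, so $M^{-1}e$ is strictly positive, and by construction $Mx = e > 0$, i.e.\ $-Ax > 0$ entrywise.

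The candidate diagonal similarity is then $K := \mathrm{diag}(1/x_1,\ldots,1/x_n)$. Setting $B := KAK^{-1}$, a direct computation yields $B_{ii} = a_{ii} < 0$ and $B_{ij} = a_{ij}\, x_j/x_i \ge 0$ for $i\neq j$. The row\nobreakdash-$i$ dominance quantity becomes
\begin{equation*}
|B_{ii}| - \sum_{j\neq i} |B_{ij}| \;=\; -a_{ii} - \sum_{j\neq i} a_{ij}\,\frac{x_j}{x_i} \;=\; \frac{(-Ax)_i}{x_i} \;>\; 0,
\end{equation*}
so $B$ is strictly row\nobreakdash-diagonally dominant with negative diagonal, as required.

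The main obstacle is essentially nil once the right characterization is selected; the only point worth flagging is the justification that $x = M^{-1}e$ is \emph{strictly} positive rather than merely nonnegative, which is where the nonsingularity of $M$ (equivalently, the absence of a zero row in $M^{-1}$) is used. Everything else is a direct verification that does not require any of the Jordan\nobreakdash-form machinery of Theorem~\ref{thm:general}, which reflects the fact that Metzler plus Hurwitz is a considerably more rigid structural assumption than what was needed in the general case.
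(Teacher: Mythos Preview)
Your argument is correct and is precisely the standard M\nobreakdash-matrix route: use inverse\nobreakdash-nonnegativity of $M=-A$ to produce a strictly positive vector $x$ with $Mx>0$, then conjugate by $K=\mathrm{diag}(x)^{-1}$ and check row dominance directly. The paper does not supply its own proof of this lemma; it records it as a known special case with a reference to \citep{berman1994nonnegative, horn2012matrix}, and your write\nobreakdash-up is exactly the argument one finds there, so there is nothing to compare.
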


\begin{lemma}
    Consider a real Hurwitz matrix and suppose it is also an H-matrix.  Then there exists a real \textbf{\textbf{diagonal}} matrix $K$ such that $K A K^{-1}$ is strictly diagonal dominant with negative diagonal entries.
\end{lemma}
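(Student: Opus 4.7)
The plan is to produce $K$ by solving a linear positivity condition coming from the H-matrix hypothesis alone, and then to use the Hurwitz hypothesis separately to pin down the sign of the diagonal entries via a Gershgorin argument. In other words, the two hypotheses in the lemma do different jobs, and I would treat them in sequence.

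\textbf{Producing the scaling.} Since $A$ is an H-matrix, the comparison matrix $M_A$ is by definition a nonsingular M-matrix. I would invoke the classical characterization of nonsingular M-matrices (cf.\ Berman and Plemmons, already cited in the excerpt): a Z-matrix $M$ is a nonsingular M-matrix if and only if there exists a componentwise-positive vector $u$ with $Mu > 0$ componentwise. Applied to $M_A$, this produces $u_1,\ldots,u_n>0$ satisfying $|a_{ii}|\,u_i > \sum_{j\neq i} |a_{ij}|\,u_j$ for each $i$. I then set $K = \operatorname{diag}(1/u_1,\ldots,1/u_n)$. Because diagonal similarity preserves diagonal entries and rescales off-diagonal entries by $(KAK^{-1})_{ij} = (u_j/u_i)\,a_{ij}$, dividing the above inequality by $u_i$ yields exactly $|a_{ii}| > \sum_{j\neq i}|(KAK^{-1})_{ij}|$, i.e.\ strict row-diagonal dominance of $KAK^{-1}$.

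\textbf{Sign of the diagonal entries.} Since $(KAK^{-1})_{ii}=a_{ii}$, I still have to show $a_{ii}<0$ for every $i$. The matrices $A$ and $KAK^{-1}$ share their spectrum, so $KAK^{-1}$ is Hurwitz. By strict row-diagonal dominance, each Gershgorin disc of $KAK^{-1}$ is centered at $a_{ii}$ with radius strictly smaller than $|a_{ii}|$, hence does not meet the imaginary axis. The Gerschgorin component theorem then says that the number of eigenvalues with positive real part equals the total multiplicity of discs centered in the open right half-plane, and the Hurwitz hypothesis forces this count to be zero. Consequently every $a_{ii}$ is negative, completing the proof.

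The only genuine step is invoking the M-matrix characterization via the positive vector $u$; once that is in hand the rest is direct computation and a standard Gershgorin bookkeeping, so I do not expect a real obstacle. The proof also illustrates the slogan that the H-matrix structure supplies the scaling, while the Hurwitz property supplies the signature of the diagonal.
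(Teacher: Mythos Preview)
The paper does not actually prove this lemma: it is listed in the ``Special cases'' section as a result recalled from the references (Horn--Johnson, Berman--Plemmons) with no accompanying argument. So there is nothing to compare your proof against, and the relevant question is simply whether your argument stands on its own.

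It does. Your first step is exactly the standard route: the characterization ``$M$ is a nonsingular M-matrix iff $Mu>0$ for some $u>0$'' applied to $M_A$ gives the entrywise inequalities $|a_{ii}|u_i>\sum_{j\ne i}|a_{ij}|u_j$, and the diagonal scaling $K=\operatorname{diag}(u_1^{-1},\dots,u_n^{-1})$ then converts these into strict row-diagonal dominance of $KAK^{-1}$, with the diagonal entries unchanged. That is precisely the argument one finds in Berman--Plemmons for the equivalence ``H-matrix $\Leftrightarrow$ generalized diagonally dominant,'' so this part is both correct and the expected proof.

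Your second step is also correct, though it is worth making the Gershgorin refinement you invoke a little more explicit. Strict dominance forces each disc to lie entirely in one open half-plane, so the collection of discs with positive center and the collection with negative center have disjoint unions; the standard continuity (homotopy) refinement of Gershgorin's theorem then says each union contains exactly as many eigenvalues as discs. Hurwitzness forces the right-half-plane count to be zero, so no $a_{ii}$ is positive; strict dominance already rules out $a_{ii}=0$. Hence all $a_{ii}<0$, as required.
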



\newpage
\bibliography{Hurwitz}
\bibliographystyle{elsarticle-harv}

\end{document}